\documentclass[envcountsect]{svjour3}
\usepackage{amsmath,amsxtra, amssymb, latexsym, amscd,cite,comment}
\usepackage{graphicx,color}
\usepackage[active]{srcltx}
\usepackage[utf8]{inputenc}
\usepackage[mathscr]{euscript}
\usepackage{mathrsfs}
\usepackage[english]{babel}
\usepackage{pgf,tikz}
\usetikzlibrary{arrows}
\usepackage{latexsym,amsfonts,verbatim,cite,marvosym,enumerate}
\usepackage{hyperref}
\smartqed


\begin{document}
	
	
	\title{Optimality conditions for approximate Pareto solutions of a nonsmooth  vector optimization problem with an infinite number of constraints}
	
	\titlerunning{Optimality conditions for approximate Pareto solutions of problem \eqref{semi-problem}}
	\author{Ta Quang Son$^1$ \and Nguyen Van Tuyen$^{2}$ \and  Ching-Feng Wen$^3$
	}
	\authorrunning{T.Q. Son, N.V. Tuyen, and C.-F. Wen}
	
	\institute{\Letter\ \ Nguyen Van Tuyen
			 \\
		\text{\ \ \ \ }	{tuyensp2@yahoo.com; nguyenvantuyen83@hpu2.edu.vn}
	\\
	\\
	\text{\ \ \ \ } Ta Quang Son
			\\
		\text{\ \ \ \ } {taquangson@sgu.edu.vn}
		\\
		\\
		\text{\ \ \ \ } Ching-Feng Wen
		\\
		\text{\ \ \ \ } {cfwen@kmu.edu.tw}
		\\
		\\
		\at$^1$\text{\ \ \ \ }Faculty of Mathematics and Applications, Saigon University, Hochiminh City, Vietnam 
		\and 
		\at$^2$\text{\ \ \ \ }Department of Mathematics, Hanoi Pedagogical University 2, Xuan Hoa, Phuc Yen, Vinh Phuc, Vietnam 
		\and
		\at$^3$\text{\ \ \ \ }Center for Fundamental Science; and Research Center for Nonlinear Analysis and Optimization, Kaohsiung Medical University; Department of Medical Research, Kaohsiung Medical University Hospital, Kaohsiung, 80708, Taiwan  		
	}

\date{Received: date / Accepted: date}
\maketitle
\begin{abstract}  In this paper, we present some new necessary and sufficient optimality conditions in terms of the Clarke subdifferentials for approximate Pareto solutions of a nonsmooth vector optimization problem which has an infinite number of constraints. As a consequence, we obtain optimality conditions for  the particular cases of cone-constrained convex vector optimization problems and semidefinite vector optimization problems. Examples are  given to illustrate the obtained results. 
\end{abstract}
\keywords{Approximate Pareto solutions \and Optimality conditions \and Clarke subdifferential\and Semi-infinite vector optimization \and Infinite vector optimization }
	\subclass{41A65 \and 65K10 \and 90C34 \and  90C29 \and 90C46}
\section{Introduction} \label{introduction}
 This paper mainly deals with constrained vector optimization problems formulated as follows:
\begin{equation}\label{semi-problem}\tag{VP}
\begin{split}
&\text{\rm Min}_{\,\mathbb{R}^m_+}\,f(x):=(f_1(x), \ldots, f_m(x))
\\
&\text{s.t.}\ \ x\in C:=\{x\in \Omega \;|\; g_t(x)\leqq 0,\ \ t\in T\},  
\end{split} 
\end{equation}
where $f_i$, $i\in I:=\{1, \ldots, m\}$, and  $g_t$, $t\in T$, are locally Lipschitz functions from a Banach space $X$ to $\mathbb{R}$, $\Omega $ is a  nonempty and closed subset of $X$, and  $T$  is an arbitrary (possibly infinite) index set. Optimization problems of this type relate to
{\em semi-infinite vector optimization problems}, provided that the space $X$ is finite-dimensional, and to {\em infinite vector optimization problems} if $X$ is infinite-dimensional; see \cite{Bonnans,Gorberna}. The modeling of problems as \eqref{semi-problem} naturally arises in a wide range of applications in various fields of mathematics, economics and engineering; we refer the readers to the books \cite{Gorberna,Reemtsen} and to the papers \cite{Canovas,Caristi,Chuong09,Chuong092,Chuong10,chuong14,Dinh07,Dinh09,Hettich,Kanzi10} with the references therein. 

Our main concern is to study optimality conditions for approximate Pareto solutions of problem \eqref{semi-problem}. It should be noted here that the study of approximate solutions is very significant because, from the computational point of view, numerical algorithms usually generate only approximate solutions if we stop them after a finite number of steps. Furthermore, the solution set may be empty in the general noncompact case (see \cite{Huy17,Jahn,Kim18,KMPT,Luc,Tuyen16}) whereas approximate solutions exist under very weak assumptions (see Propositions \ref{Theorem33} and \ref{Theorem34} in Section \ref{Section2} below).   

In the literature, there are many publications devoted to optimality conditions for approximate solutions  of  semi-infinite/infinite scalar optimization problems; see, for example,  \cite{Dinh07,Kanzi10,Kanzi14,L114,Long18,Loridan1982,Mishra12,Son09}. However, in contrast to the scalar case, there are a few works dealing optimality conditions for approximate Pareto solutions of semi-infinite/infinite vector optimization problems; see \cite{Lee12,Son18,Shitkovskaya18}. In \cite{Lee12,Shitkovskaya18}, the authors  obtained necessary and sufficient optimality conditions  for approximate Pareto solutions of a convex semi-infinite/infinite vector optimization problem under various kind of Farkas--Minkowski constraint qualifications. By using the Chankong--Haimes scalarization method,  Kim and Son \cite{Son18} established some necessary  optimality conditions for approximate quasi Pareto solutions of  a locally Lipschitz semi-infinite vector optimization problem. 

In this paper, we present some necessary conditions of Karush--Kuhn--Tucker  type for approximate (quasi) Pareto solutions of  problem \eqref{semi-problem}  under a Slater-type constraint qualification hypothesis. Sufficient optimality conditions for approximate (quasi) Pareto solutions of problem \eqref{semi-problem} are also provided by means of introducing the concepts of (strictly) generalized convex functions, defined in terms of the Clarke subdifferential of locally Lipschitz functions. The obtained results improve the corresponding ones in \cite{Lee12,Son18,Shitkovskaya18}. As an application, we establish optimality conditions for cone-constrained convex vector optimization problems and semidefinite vector optimization problems.  In addition, some examples are also given for  illustrating the obtained results. 

In Section \ref{Section2}, we  recall some basic  definitions and preliminaries from the theory of vector optimization and variational analysis. Section \ref{section3}  presents the main results. The application of the obtained results in Section \ref{section3} to cone-constrained convex vector optimization problems and semidefinite vector optimization problems is addressed in Section \ref{Applications}.  
\section{Preliminaries} \label{Section2}

\subsection{Approximate Pareto solutions} 
Let $\mathbb{R}^m_+ := \{y := (y_1, \ldots, y_m)\,|\, \, y_i\geqq 0,\,\, i\in I\}$ be the nonnegative orthant in $\mathbb{R}^m$. For $a, b\in\mathbb{R}^m$, by $a\leqq b$, we mean $b-a\in \mathbb{R}^m_+$; by $a\leq b$, we mean $b-a\in \mathbb{R}^m_+\setminus\{0\}$; and by $a<b$, we mean $b-a\in \text{int}\,\mathbb{R}^m_+$. 
  
\begin{definition}[{see \cite{Loridan1984}}]{\rm   
   Let $\xi \in \mathbb{R}^m_+$ and $\bar x\in C$. We say that
\begin{enumerate}[\rm(i)]
\item $\bar{x}$ is a {\em weakly Pareto solution} (resp.,  a {\em Pareto solution}) of  \eqref{semi-problem} if there is no $x\in C$ such that 
$$f(x)<  f(\bar{x})\ \ (\text{resp.}, \,\, f(x)\leq  f(\bar{x})).$$

\item  $\bar x$ is a {\em $\xi$-weakly Pareto solution} (resp., a {\em $\xi$-Pareto solution}) of  \eqref{semi-problem} if there is no $x\in C$ such that 
$$f(x) + \xi  <  f(\bar x) \ \ (\text{resp.}, \,\,  f(x) + \xi  \leq  f(\bar x)).$$

\item  $\bar x$ is a {\em $\xi$-quasi-weakly Pareto solution} (resp., a {\em $\xi$-quasi Pareto solution}) of  \eqref{semi-problem} if there is no $x\in C$  such that 
$$f(x) + \|x - \bar x\| \xi  <  f(\bar x)\ \ (\text{resp.}, \,\, f(x) + \|x-\bar x\|\xi\leq f(\bar x)).$$
\end{enumerate}	
 
}\end{definition}

\begin{remark}{\rm   
 If $\xi=0$, then the concepts of a $\xi$-Pareto solution and a $\xi$-quasi-Pareto solution (resp., a $\xi$-weakly Pareto solution and a $\xi$-quasi-weakly Pareto solution)  coincide with the concept of  a Pareto solution (resp., a  weakly Pareto solution). Hence when dealing with approximate Pareto solutions we only consider the case that $\xi\in \mathbb{R}^m_+\setminus\{0\}$.
}\end{remark}

\begin{definition}{\rm 
Let $A$ be a subset in $\mathbb{R}^m$ and ${\bar{y}} \in \mathbb{R}^m$. The set $A \cap ({\bar{y}} - \mathbb{R}^m_+)$ is called a {\em section} of $A$ at ${\bar{y}}$ and denoted by $[A]_{\bar{y}}.$ The section $[A]_{\bar{y}}$ is said to be {\em bounded} if, and only if, there is $a\in\mathbb{R}^m$ such that
$$[A]_{\bar{y}} \subset a + \mathbb{R}^m_+.$$ 
}\end{definition}

\begin{remark}\label{remark-2}{\rm Let $\bar y$ be an arbitrary element in $f(C)$. It is easily seen that every $\xi$-Pareto solution (resp., $\xi$-quasi Pareto solution) of \eqref{semi-problem} on $C\cap  f^{-1}\left([f(C)]_{\bar y}\right)$ is also a  $\xi$-Pareto one (resp., $\xi$-quasi Pareto one) of \eqref{semi-problem} on $C$.     
}
\end{remark}
The following results  give some sufficient conditions for the  existence of approximate Pareto solutions of problem \eqref{semi-problem}.

\begin{proposition}[Existence of approximate Pareto solutions] \label{Theorem33} 
Assume that $f(C)$ has a nonempty bounded section. Then, for each $\xi \in \mathbb{R}^m_+\setminus\{0\},$ the problem~\eqref{semi-problem} admits at least one $\xi$-Pareto solution.
\end{proposition}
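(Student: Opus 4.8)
The plan is to turn the vector problem into a scalar one by summing coordinates and to exploit that the total perturbation $\sum_{i\in I}\xi_i$ is \emph{strictly} positive; the noncompactness is then harmless precisely because we only seek an approximate solution. First I would fix $\bar y\in f(C)$ whose section $[f(C)]_{\bar y}$ is nonempty and bounded, and record $a\in\mathbb{R}^m$ with $[f(C)]_{\bar y}\subset a+\mathbb{R}^m_+$. Put $D:=C\cap f^{-1}\big([f(C)]_{\bar y}\big)$; this set is nonempty (it contains every preimage of $\bar y$) and satisfies $f(D)\subset[f(C)]_{\bar y}$, so that $f_i(x)\ge a_i$ for all $x\in D$ and all $i\in I$.

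Next I would scalarize by $s(y):=\sum_{i\in I}y_i$ and study $s\circ f$ on $D$. The coordinatewise lower bound above forces $s(f(x))\ge\sum_{i\in I}a_i$ for every $x\in D$, hence $\mu:=\inf_{x\in D}s(f(x))$ is finite. The key scalar is $\delta:=\sum_{i\in I}\xi_i$, which is strictly positive since $\xi\in\mathbb{R}^m_+\setminus\{0\}$. Invoking only the definition of the infimum---and deliberately not demanding that it be attained---I would select $x^*\in D$ with
\[
s(f(x^*))<\mu+\delta .
\]

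Finally I would claim that $x^*$ is a $\xi$-Pareto solution of \eqref{semi-problem} and argue by contradiction. If it were not, there would be $x\in C$ with $f(x)+\xi\leq f(x^*)$, i.e.\ $f(x^*)-f(x)-\xi\in\mathbb{R}^m_+\setminus\{0\}$, whence $f_i(x)\le f_i(x^*)-\xi_i$ for each $i\in I$. Before using this I must check that $x$ lies in $D$: from $f(x)\leqq f(x^*)-\xi\leqq f(x^*)\leqq\bar y$ and $x\in C$ one gets $f(x)\in[f(C)]_{\bar y}$, so $x\in D$ and therefore $s(f(x))\ge\mu$. On the other hand, summing $f_i(x)\le f_i(x^*)-\xi_i$ over $i$ yields $s(f(x))\le s(f(x^*))-\delta<\mu$, a contradiction. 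I expect this ``downward closedness'' of $D$---that a point dominating $x^*$ cannot escape the bounded section---to be the only genuinely technical step (it is exactly the mechanism behind Remark \ref{remark-2}); the rest is bookkeeping once one realizes that the strict positivity of $\delta$ supplies the gap that an approximate, rather than exact, infimum can absorb.
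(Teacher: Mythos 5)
Your argument is correct, and it takes a genuinely different route from the paper, which does not write out a proof at all: the authors simply combine Remark~\ref{remark-2} (a $\xi$-Pareto solution of the problem restricted to $C\cap f^{-1}\left([f(C)]_{\bar y}\right)$ is a $\xi$-Pareto solution on all of $C$) with the external existence result \cite[Lemma 3.1]{ha06} for approximate efficient points of a set admitting a bounded section. You perform the same reduction to the set $D$, but then prove the existence statement from scratch by linear scalarization: coordinatewise boundedness below of $f(D)$ makes $\mu=\inf_{x\in D}\sum_{i\in I}f_i(x)$ finite, and a point $x^*$ within $\delta=\sum_{i\in I}\xi_i>0$ of this infimum cannot be $\xi$-dominated, since any dominator stays in $D$ (your downward-closedness step, which is exactly the content of Remark~\ref{remark-2}) and would lower the sum by at least $\delta$. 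What this buys is a short, entirely self-contained and elementary proof where the paper leans on an Ekeland-type lemma; the cited lemma, in exchange, is stated for general sets and covers orders induced by more general cones. One small point to tighten: the hypothesis only guarantees a nonempty bounded section at some $\bar y\in\mathbb{R}^m$, not necessarily at a point of $f(C)$, so you should add the one-line reduction that any $y'\in[f(C)]_{\bar y}$ belongs to $f(C)$ and $[f(C)]_{y'}\subset[f(C)]_{\bar y}$ is again a nonempty bounded section, after which your choice of $\bar y\in f(C)$ is legitimate.
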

\begin{proof} The assertion follows directly   from Remark \ref{remark-2} and \cite[Lemma 3.1]{ha06}, so is omitted. $\hfill\Box$
\end{proof}
\begin{proposition}[Existence of approximate quasi Pareto solutions]  \label{Theorem34} 
If $f(C)$ has a nonempty bounded section, then, for every $\xi \in {\rm int}\, \mathbb{R}^m_+,$   problem \eqref{semi-problem} admits at least one $\xi$-quasi Pareto solution.
\end{proposition}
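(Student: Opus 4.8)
The plan is to reduce the existence of a $\xi$-quasi Pareto solution to a single application of Ekeland's variational principle to a suitable scalarization of $f$. First I would fix $\bar y\in f(C)$ for which $[f(C)]_{\bar y}$ is nonempty and bounded, say $[f(C)]_{\bar y}\subset a+\mathbb{R}^m_+$ for some $a\in\mathbb{R}^m$, and set
\[
S:=C\cap f^{-1}\big([f(C)]_{\bar y}\big)=\{x\in C\;|\;f(x)\leqq \bar y\}.
\]
Since $f$ is continuous (being locally Lipschitz) and $C$ is closed in the Banach space $X$, the set $S$ is closed, hence a complete metric space for the metric induced by $\|\cdot\|$; it is nonempty because $\bar y\in f(C)$, and $f(S)\subset a+\mathbb{R}^m_+$. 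By Remark \ref{remark-2} it suffices to produce a $\xi$-quasi Pareto solution of \eqref{semi-problem} relative to $S$.

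Next I would scalarize by the Gerstewitz--Tammer function in the direction $\xi$,
\[
\varphi_\xi(y):=\inf\{s\in\mathbb{R}\;|\;y\leqq s\xi\}=\max_{i\in I}\frac{y_i}{\xi_i}.
\]
Here the hypothesis $\xi\in{\rm int}\,\mathbb{R}^m_+$ (so that every $\xi_i>0$) is exactly what guarantees that $\varphi_\xi$ is finite-valued and enjoys the three properties I need: it is continuous, it is monotone with respect to $\leqq$, and it satisfies the translation identity $\varphi_\xi(y+t\xi)=\varphi_\xi(y)+t$ for all $t\in\mathbb{R}$. Because $f(S)\subset a+\mathbb{R}^m_+$, monotonicity gives $\varphi_\xi(f(x))\geq\varphi_\xi(a)$ on $S$, so the scalar function $h:=\varphi_\xi\circ f$ is continuous and bounded below on the complete space $S$.

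I would then apply Ekeland's variational principle to $h$ on $S$ with $\varepsilon=1$: there is $\bar x\in S$ such that $h(x)+\|x-\bar x\|>h(\bar x)$ for every $x\in S$ with $x\neq\bar x$. It remains to translate this scalar inequality back into the vector conclusion, and this is the step I expect to carry the real content. Suppose, to the contrary, that $\bar x$ were not a $\xi$-quasi Pareto solution relative to $S$; then there is $x\in S$ with $f(x)+\|x-\bar x\|\xi\leq f(\bar x)$, and necessarily $x\neq\bar x$ since the nonzero difference rules out $x=\bar x$. Applying $\varphi_\xi$ and using its monotonicity followed by the translation identity yields $h(x)+\|x-\bar x\|=\varphi_\xi\big(f(x)+\|x-\bar x\|\xi\big)\leq\varphi_\xi(f(\bar x))=h(\bar x)$, contradicting the Ekeland inequality. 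Hence $\bar x$ is a $\xi$-quasi Pareto solution relative to $S$, and Remark \ref{remark-2} promotes it to a $\xi$-quasi Pareto solution of \eqref{semi-problem} on $C$.

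The main obstacle is the passage from the scalar to the vector statement: one must choose the perturbation constant in Ekeland's principle to match the coefficient $1$ in front of $\|x-\bar x\|$, and one must have a scalarizing functional that is simultaneously $\leqq$-monotone and \emph{exactly} $\xi$-translation-invariant, which is precisely why $\varphi_\xi$ (and the assumption $\xi\in{\rm int}\,\mathbb{R}^m_+$) is used rather than an arbitrary positive linear functional. An alternative, slightly longer route avoids scalarization altogether: equip $S$ with the partial order $x'\preceq x\iff f(x')+\|x'-x\|\xi\leqq f(x)$ — whose antisymmetry again forces $\xi\in{\rm int}\,\mathbb{R}^m_+$ — and obtain $\bar x$ as a minimal element by an ordering/Zorn argument, the delicate point there being the verification that every $\preceq$-chain admits a lower bound, which again rests on completeness of $S$ together with the bounded section.
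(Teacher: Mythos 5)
Your proof is correct, but it takes a genuinely different route from the paper's. The paper first invokes Proposition \ref{Theorem33} to produce a $\xi$-Pareto solution $\bar x$ with $f(C)\cap[f(\bar x)-\xi-\mathbb{R}^m_+\setminus\{0\}]=\emptyset$, checks that the sets $\{u\in C\;|\;f(u)+\|u-x\|\xi\leqq f(x)\}$ are closed, and then applies a \emph{vector-valued} Ekeland variational principle (Araya's Theorem 3.1) on all of $C$ to obtain the $\xi$-quasi Pareto point directly in vector form. You instead restrict to the closed bounded section $S$, scalarize with the Gerstewitz--Tammer functional $\varphi_\xi(y)=\max_{i\in I}y_i/\xi_i$, and apply the classical scalar Ekeland principle with $\varepsilon=\lambda=1$; the $\leqq$-monotonicity and the exact $\xi$-translation identity of $\varphi_\xi$ (both of which do require $\xi\in\mathrm{int}\,\mathbb{R}^m_+$, as you correctly flag) convert the scalar conclusion back into the vector one, and Remark \ref{remark-2} promotes the solution from $S$ to $C$. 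All the individual steps check out: $S$ is nonempty, closed, hence complete; $h=\varphi_\xi\circ f$ is continuous and bounded below on $S$ since $f(S)\subset a+\mathbb{R}^m_+$; and the contradiction step correctly forces $x\neq\bar x$ before invoking the strict Ekeland inequality. Your version is more self-contained --- it needs neither Proposition \ref{Theorem33} as a stepping stone nor a vector EVP, and in effect reproves the relevant special case of Araya's theorem. What the paper's route buys is the extra localization $f(x^*)<f(\bar x)\leqq f(x^0)$ and consistency with the set-valued EVP machinery already used for Proposition \ref{Theorem33}; your route localizes the solution in the section $S$ instead, which is equally adequate for the stated result.
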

\begin{proof} 
Let $x^0 \in C$ be such that the section of $f(C)$ at $f(x^0)$ is bounded. By Proposition~\ref{Theorem33}, there exists $\bar x \in C$ such that $f(\bar x)\leqq  f(x^0)$ and 
\begin{eqnarray*} 
f(C)\ \cap\ [f(\bar x) - \xi - \mathbb{R}^m_+\setminus\{0\}] &=& \emptyset.
\end{eqnarray*}
Consequently, 
$$f(C)\cap [f(\bar x) - \xi - \mathrm{int}\, \mathbb{R}^m_+]\ =\  \emptyset.$$
By the continuity of $f$ and the closedness of $C,$  for each $x \in C,$ the set
$$\{u \in C \; | \; f(u)+\|u-x\|\xi \ \leqq  \ f(x)\}$$
is closed. By \cite[Theorem 3.1]{araya}, there exists $x^*\in C$ such that $f(x^*)<f(\bar x)$ and 
\begin{eqnarray*}
f(x) + \|x-x^*\|\xi- f(x^*) & \notin& -\mathbb{R}^m_+ \quad \textrm{for all} \quad x \in C, x\neq x^*.
\end{eqnarray*}
Consequently, 
\begin{eqnarray*}
f(x) + \|x-x^*\|\xi & \nleq & f(x^*) \quad \textrm{ for all } \quad x \in C, x \ne x^*.
\end{eqnarray*}
Thus,  $x^*$ is a $\xi$-quasi Pareto solution of \eqref{semi-problem}. The proof is complete.  $\hfill\qed$
\end{proof}
\subsection{Normals and subdifferentials}  For the Banach space $X$, the bracket $\langle \cdot\,, \cdot\rangle$ stands for the canonical pairing between space $X$ and its dual $X^*$.  The  closed unit ball of $X$ is denoted by $B_X.$ The closed ball with center $x$ and radius $\delta$ is denoted by $B(x, \delta)$. Let $A$ be a nonempty subset of $X$. The topological interior, the topological closure, and the convex hull  of A are denoted, respectively, by $\mathrm{int}\, A$, $\mathrm{cl}\, A$, and $\mathrm{cone}\, A$. The symbol $A^\circ$ stands for the polar cone of a given set $A\subset X$, i.e.,
\begin{equation*}
A^\circ=\{x^*\in X^*\;|\; \langle x^*, x\rangle\leqq 0, \ \ \forall x\in A\}.
\end{equation*}
 
Let $\varphi \colon X \to  \mathbb{R}$ be a locally Lipschitz function. The Clarke generalized directional derivative  of $\varphi$ at $\bar x\in X$ in the direction $d\in X$, denoted by $\varphi^\circ(\bar x; d)$, is defined by
\begin{equation*}
\varphi^\circ(\bar x; d):=\mathop{\limsup\limits_{x\to\bar x}}
	\limits_{t\downarrow 0}\frac{\varphi(x+td)-\varphi(x)}{t}.
\end{equation*}
The Clarke subdifferential of $\varphi$ at $\bar x$ is defined by
\begin{align*}
		\partial \varphi (\bar x):=\{x^*\in X^* \,|\, \langle x^*, d\rangle\leqq \varphi^\circ(\bar x; d), \ \ \forall d\in X\}.
\end{align*}

Let $S$ be a nonempty closed subset of $X$. The Clarke tangent cone to $S$ at $\bar x\in S$ is defined by
\begin{equation*}
T(\bar x; S):=\{v\in X\;|\; d_S^\circ(\bar x; v)=0\},
\end{equation*}
where $d_S$ denotes  the distance function to $S$. The Clarke normal cone to  $S$ at $\bar x\in S$ is defined by 
$$N(\bar x; S):=[T(\bar x; S)]^\circ.$$
The following lemmas will be used in the sequel.
\begin{lemma}[{see \cite[p. 52]{Clakre83}}] \label{Fermat-rule}
Let $\varphi$ be a locally Lipschitz function from $X$ to $\mathbb{R}$ and $S$ be a nonempty subset of $X$.  If $\bar x$ is a local minimizer of $\varphi$ on $S$, then
\begin{equation*}
0\in\partial \varphi(\bar x)+N(\bar x; S).
\end{equation*}
\end{lemma}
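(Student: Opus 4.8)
The plan is to pass from the geometric statement of local minimality to the analytic inequality $\varphi^\circ(\bar x;v)\geq 0$ for every Clarke tangent direction $v$, and then to dualize this inequality into the asserted inclusion by a separation argument. First I would record the directional consequence of minimality: for every $v\in T(\bar x;S)$ one has $\varphi^\circ(\bar x;v)\geq 0$. To obtain this I would use the sequential description of the Clarke tangent cone. Taking the constant sequence $x_n=\bar x\in S$ and any $t_n\downarrow 0$, one gets $v_n\to v$ with $\bar x+t_nv_n\in S$ for all $n$; local minimality gives $\varphi(\bar x+t_nv_n)\geq\varphi(\bar x)$ for $n$ large, and, with $L$ a local Lipschitz constant of $\varphi$ near $\bar x$,
\[
\frac{\varphi(\bar x+t_nv)-\varphi(\bar x)}{t_n}\ \geq\ \frac{\varphi(\bar x+t_nv_n)-\varphi(\bar x)}{t_n}-L\|v-v_n\|\ \geq\ -L\|v-v_n\|\ \longrightarrow\ 0 .
\]
Since $\varphi^\circ(\bar x;v)$ is a $\limsup$ taken over all base points and step sizes, it dominates the $\limsup$ of the left-hand side, whence $\varphi^\circ(\bar x;v)\geq 0$.

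Next I would reformulate the goal using the standard structural properties of the Clarke subdifferential for a locally Lipschitz $\varphi$: the set $\partial\varphi(\bar x)$ is nonempty, convex and weak$^*$-compact, and its support function is exactly $\varphi^\circ(\bar x;\cdot)$, i.e. $\varphi^\circ(\bar x;v)=\max\{\langle x^*,v\rangle : x^*\in\partial\varphi(\bar x)\}$; moreover $T(\bar x;S)$ is a closed convex cone with $N(\bar x;S)=[T(\bar x;S)]^\circ$. The desired inclusion $0\in\partial\varphi(\bar x)+N(\bar x;S)$ is equivalent to the existence of $x^*\in\partial\varphi(\bar x)$ with $-x^*\in N(\bar x;S)$, that is, with $\langle x^*,v\rangle\geq 0$ for all $v\in T(\bar x;S)$; equivalently $\partial\varphi(\bar x)\cap\bigl(-N(\bar x;S)\bigr)\neq\emptyset$.

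Then I would argue by contradiction. If these two convex sets were disjoint, a strict separation in $(X^*,w^*)$, whose continuous dual is $X$, would produce a direction $v\in X$ with $\sup_{x^*\in\partial\varphi(\bar x)}\langle x^*,v\rangle<\inf_{w^*\in -N(\bar x;S)}\langle w^*,v\rangle$. Since $-N(\bar x;S)$ is a cone, the right-hand infimum is either $-\infty$ or $0$, so strictness forces it to equal $0$ and forces $v\in[N(\bar x;S)]^\circ=[T(\bar x;S)]^{\circ\circ}=T(\bar x;S)$ by the bipolar theorem. But then the left-hand quantity is $\varphi^\circ(\bar x;v)<0$ with $v\in T(\bar x;S)$, contradicting the first step. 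This yields the inclusion. (I would prefer this self-contained route to invoking the nonsmooth sum rule $\partial(\varphi+\psi_S)\subset\partial\varphi+\partial\psi_S$ together with $\partial\psi_S(\bar x)=N(\bar x;S)$, since the latter is itself a deeper fact.)

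I expect the first step to be the main obstacle, as it is the only place where minimality is genuinely used and where the infinite-dimensional geometry matters: one must combine the sequential characterization of the Clarke tangent cone with the Lipschitz estimate in order to replace the \emph{moving} feasible points $\bar x+t_nv_n$ by the \emph{fixed} direction $v$ inside the definition of $\varphi^\circ$. By comparison, the separation step is routine, provided one is careful that the weak$^*$-compactness of $\partial\varphi(\bar x)$ legitimizes strict separation from the weak$^*$-closed convex cone $-N(\bar x;S)$ and that the separating functional may be represented by an element of $X$.
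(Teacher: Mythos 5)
The paper does not prove this lemma at all: it is quoted verbatim from Clarke's book (p.~52), where the standard argument runs through exact penalization — a local minimizer of $\varphi$ on $S$ is an unconstrained local minimizer of $\varphi+Kd_S$ for $K$ a Lipschitz rank, so $0\in\partial(\varphi+Kd_S)(\bar x)\subset\partial\varphi(\bar x)+K\partial d_S(\bar x)\subset\partial\varphi(\bar x)+N(\bar x;S)$. Your proof is correct but follows a genuinely different, ``dual'' route: you first derive the primal stationarity condition $\varphi^{\circ}(\bar x;v)\geq 0$ for all $v\in T(\bar x;S)$ via the sequential characterization of the Clarke tangent cone together with the Lipschitz estimate, and then convert it into the subdifferential inclusion by strict separation of the weak$^{*}$-compact convex set $\partial\varphi(\bar x)$ from the weak$^{*}$-closed convex cone $-N(\bar x;S)$, using the support-function identity $\varphi^{\circ}(\bar x;\cdot)=\max\langle\partial\varphi(\bar x),\cdot\rangle$ and the bipolar theorem $[T(\bar x;S)]^{\circ\circ}=T(\bar x;S)$. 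The separation step is carried out carefully (the cone forces the infimum to be $0$, the separating functional lives in $X$ because the continuous dual of $(X^{*},w^{*})$ is $X$), and the only external inputs are standard facts: the sequential description of $T(\bar x;S)$ (which requires $S$ closed, consistent with the paper's setting), convexity and closedness of $T(\bar x;S)$, and weak$^{*}$-compactness of $\partial\varphi(\bar x)$. The trade-off is that your route avoids both the exact-penalty lemma and the sum rule $\partial(\varphi+\psi)\subset\partial\varphi+\partial\psi$, at the price of invoking the (equally nontrivial) sequential tangent-cone theorem and Hahn--Banach separation; Clarke's route is shorter given his machinery, yours is more self-contained at the level of definitions used in this paper.
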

\begin{lemma}[{see \cite[Proposition 2.3.3]{Clakre83}}] \label{sum-rule}
Let $\varphi_l\colon X\to \mathbb{R}$, $l=1, \ldots, p$, $p\geqq 2$,  be locally Lipschitz around $\bar x\in X$. Then we have the following inclusion
	\begin{equation*}
	\partial (\varphi_1+\ldots+\varphi_p) (\bar x)\subset \partial  \varphi_1 (\bar x) +\ldots+\partial \varphi_p (\bar x).
	\end{equation*}
\end{lemma}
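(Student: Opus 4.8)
The plan is to reduce to the two-summand case and then exploit the support-function structure of the Clarke subdifferential. Writing $\varphi_1 + \ldots + \varphi_p = (\varphi_1 + \ldots + \varphi_{p-1}) + \varphi_p$, an easy induction on $p$ reduces everything to proving $\partial(\varphi_1 + \varphi_2)(\bar x) \subset \partial\varphi_1(\bar x) + \partial\varphi_2(\bar x)$ for two locally Lipschitz functions. For this I would first record the two standard consequences of the local Lipschitz hypothesis: for each $i$ the map $d \mapsto \varphi_i^\circ(\bar x; d)$ is positively homogeneous and subadditive, and $\partial\varphi_i(\bar x)$ is a nonempty, convex, weak*-compact subset of $X^*$ whose support function is $\varphi_i^\circ(\bar x; \cdot)$; that is,
$$\varphi_i^\circ(\bar x; d) = \max_{x^* \in \partial\varphi_i(\bar x)} \langle x^*, d\rangle =: \sigma_{\partial\varphi_i(\bar x)}(d), \qquad \forall d \in X.$$
Weak*-compactness holds because every element of $\partial\varphi_i(\bar x)$ is bounded in norm by the local Lipschitz constant of $\varphi_i$ near $\bar x$, so that $\partial\varphi_i(\bar x)$ lies in a multiple of the dual unit ball, to which Alaoglu's theorem applies; nonemptiness and convexity come from the definition together with the Hahn--Banach theorem.

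Next I would establish the subadditivity of the generalized directional derivative in its summands,
$$(\varphi_1 + \varphi_2)^\circ(\bar x; d) \leqq \varphi_1^\circ(\bar x; d) + \varphi_2^\circ(\bar x; d), \qquad \forall d \in X,$$
which is immediate from the elementary inequality $\limsup(a + b) \leqq \limsup a + \limsup b$ applied to the difference quotients defining $\varphi^\circ$. Combining this with the support-function identity above and the fact that the support function of a Minkowski sum is the sum of the support functions, every $x^* \in \partial(\varphi_1 + \varphi_2)(\bar x)$ satisfies
$$\langle x^*, d\rangle \leqq (\varphi_1 + \varphi_2)^\circ(\bar x; d) \leqq \varphi_1^\circ(\bar x; d) + \varphi_2^\circ(\bar x; d) = \sigma_K(d), \qquad \forall d \in X,$$
where $K := \partial\varphi_1(\bar x) + \partial\varphi_2(\bar x)$.

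The concluding and most delicate step is to pass from the support-function inequality $\langle x^*, d\rangle \leqq \sigma_K(d)$ for all $d$ back to the membership $x^* \in K$. Since $\partial\varphi_1(\bar x)$ and $\partial\varphi_2(\bar x)$ are convex and weak*-compact, their sum $K$ is again convex and weak*-compact, hence weak*-closed; were $x^*$ not in $K$, the Hahn--Banach separation theorem in the weak* topology would furnish a weak*-continuous functional, i.e. some $d \in X$, with $\langle x^*, d\rangle > \sigma_K(d)$, contradicting the previous display. I expect this separation to be the main obstacle, since it hinges on the weak*-compactness, and hence closedness, of the subdifferentials; this is precisely where the finiteness of the local Lipschitz constant enters, and it is what allows the argument to run in an arbitrary Banach space rather than only in finite dimensions.
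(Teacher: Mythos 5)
Your proof is correct and is essentially the standard argument from the cited source (Clarke, Proposition 2.3.3), which the paper invokes without reproducing a proof: subadditivity of $(\cdot)^\circ$ in the function argument, the support-function characterization of the weak$^*$-compact convex sets $\partial\varphi_i(\bar x)$, and Hahn--Banach separation to recover membership in the Minkowski sum. All the delicate points you flag (weak$^*$-compactness via the Lipschitz bound and Alaoglu, weak$^*$-closedness of the sum, separation by an element of $X$) are handled correctly.
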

\begin{lemma}[{see \cite[Proposition 2.3.12]{Clakre83}}] \label{max-rule}
Let $\varphi_l\colon X\to {\mathbb{R}}$, $l=1, \ldots, p$,  be  locally Lipschitz around $\bar x\in X$. Then the function
	$ \phi(\cdot):=\max\{\varphi_l(\cdot)\;|\;l=1, \ldots, p\}$
	is also locally Lipschitz around $\bar x$ and one has
	\begin{equation*}
	\partial \phi(\bar x)\subset \bigcup\bigg\{\sum_{l=1}^{p}\lambda_l\partial\varphi_l (\bar x)\;|\; (\lambda_1, \ldots, \lambda_p)\in \mathbb{R}^p_+, \sum_{l=1}^{p}\lambda_l=1, \lambda_l[\varphi_l(\bar x)-\phi(\bar x)]=0\bigg\}.
	\end{equation*}
\end{lemma}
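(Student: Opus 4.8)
The inequalities defining the Clarke subdifferential exhibit $\varphi_l^\circ(\bar x;\cdot)$ as the support function of $\partial\varphi_l(\bar x)$, so the natural strategy is to reduce the claimed inclusion to a comparison of support functions, after first settling the Lipschitz property and localizing to the active indices. I would proceed in three stages. \emph{Stage 1 (Lipschitz property and localization).} Choose $\delta>0$ and constants $K_l$ so that each $\varphi_l$ is Lipschitz with modulus $K_l$ on $B(\bar x,\delta)$. For $x,y\in B(\bar x,\delta)$ one has $|\phi(x)-\phi(y)|\leq \max_l|\varphi_l(x)-\varphi_l(y)|\leq (\max_l K_l)\|x-y\|$, so $\phi$ is locally Lipschitz around $\bar x$. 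I then introduce the active index set $I(\bar x):=\{l\,|\,\varphi_l(\bar x)=\phi(\bar x)\}$; by continuity, after shrinking $\delta$, every inactive function satisfies $\varphi_l(x)<\phi(x)$ on $B(\bar x,\delta)$, whence $\phi=\max_{l\in I(\bar x)}\varphi_l$ near $\bar x$ and the inactive functions contribute nothing to $\phi^\circ(\bar x;\cdot)$.

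\emph{Stage 2 (the key inequality).} The heart of the argument is the estimate $\phi^\circ(\bar x;d)\leq \max_{l\in I(\bar x)}\varphi_l^\circ(\bar x;d)$ for every $d\in X$. To prove it, fix $d$ and select sequences $x_n\to\bar x$ and $t_n\downarrow 0$ realizing the upper limit in the definition of $\phi^\circ(\bar x;d)$. For each $n$ pick an index $l_n$ with $\phi(x_n+t_nd)=\varphi_{l_n}(x_n+t_nd)$; since there are finitely many indices, a subsequence has $l_n\equiv l$ constant, and by Stage 1 this $l$ is active. Using $\phi(x_n)\geq\varphi_l(x_n)$ gives $t_n^{-1}[\phi(x_n+t_nd)-\phi(x_n)]\leq t_n^{-1}[\varphi_l(x_n+t_nd)-\varphi_l(x_n)]$, and passing to the limit, together with the definition of $\varphi_l^\circ$, yields $\phi^\circ(\bar x;d)\leq \varphi_l^\circ(\bar x;d)$, which is dominated by the asserted maximum.

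\emph{Stage 3 (support functions and separation).} Let $S$ denote the set on the right-hand side of the inclusion. The complementarity condition $\lambda_l[\varphi_l(\bar x)-\phi(\bar x)]=0$ forces $\lambda_l=0$ for $l\notin I(\bar x)$, so $S=\mathrm{co}\,\bigcup_{l\in I(\bar x)}\partial\varphi_l(\bar x)$. Each $\partial\varphi_l(\bar x)$ is nonempty, convex and weak$^*$-compact, so $S$ is convex (being a convex hull) and weak$^*$-compact (it is the image of the product of the unit simplex with $\prod_l\partial\varphi_l(\bar x)$ under a weak$^*$-continuous map). Maximizing a linear functional over the simplex places all weight on the largest coefficient, so the support function of $S$ is $\sigma_S(d)=\max_{l\in I(\bar x)}\varphi_l^\circ(\bar x;d)$. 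Now take any $x^*\in\partial\phi(\bar x)$: by definition and Stage 2, $\langle x^*,d\rangle\leq\phi^\circ(\bar x;d)\leq\sigma_S(d)$ for all $d\in X$. Since $S$ is weak$^*$-closed and convex, the Hahn--Banach separation theorem gives $x^*\in S$, which is the asserted inclusion.

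The main obstacle is Stage 2: controlling the difference quotient of $\phi$ when the index attaining the maximum varies along the defining sequence. The device of extracting a constant-index subsequence—available precisely because the index set is finite—is what legitimizes the passage to the limit. The only remaining infinite-dimensional subtlety is the weak$^*$-compactness of the $\partial\varphi_l(\bar x)$ required for the separation step, which is standard for Clarke subdifferentials of locally Lipschitz functions on a Banach space.
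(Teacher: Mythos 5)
The paper offers no proof of this lemma; it is quoted verbatim from Clarke \cite[Proposition~2.3.12]{Clakre83}, and your argument is correct and essentially reproduces the standard proof given there: localize to the active indices, establish the key inequality $\phi^\circ(\bar x;d)\leq\max_{l\in I(\bar x)}\varphi_l^\circ(\bar x;d)$ by extracting a constant-index subsequence, and then recognize the right-hand side as the weak$^*$-compact convex set whose support function is that maximum, so that separation finishes the proof. No gaps; the points you flag (weak$^*$-compactness of each $\partial\varphi_l(\bar x)$ and the fact that $\varphi_l^\circ(\bar x;\cdot)$ is its support function) are indeed the standard facts needed.
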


\section{Optimality conditions}  \label{section3}
Hereafter we assume that  the following assumptions are satisfied:
\begin{enumerate}[(i)]
\item $T$ is a compact topological space;
\item $X$ is separable; or $T$ is metrizable and $\partial g_t(x)$ is upper semicontinuous $(w^*)$ in $t$ for each  $x\in X$.
\end{enumerate}

Denote by $\mathbb{R}_+^{|T|}$ the set of all functions $\mu\colon T\to\mathbb{R}_+$ such that $\mu_t:=\mu(t)=0$ for all $t\in T$ except for finitely many points. The active constraint multipliers set of problem \eqref{semi-problem} at $\bar x\in \Omega$ is defined by
\begin{equation*}
A(\bar x):=\left\{\mu\in \mathbb{R}_+^{|T|}\;|\; \mu_tg_t(\bar x)=0, \ \ \forall t\in T\right\}.
\end{equation*}
For each $x\in X$, put $G(x):=\max_{t\in T} g_t(x)$ and 
$$T(x):= \left\{t\in T\;|\; g_t(x)=G(x)\right\}.$$  

Fix $\xi\in\mathbb{R}^m_+\setminus\{0\}$. The following theorem gives a necessary optimality condition of fuzzy Karush--Kuhn--Tucker  type  for  $\xi$-weakly Pareto solutions of problem \eqref{semi-problem}.  
\begin{theorem}\label{thrm31} Let $\bar x$ be a  $\xi$-weakly Pareto solution of problem \eqref{semi-problem}. If the following constraint qualification condition holds
\begin{equation}\label{Slater}\tag{$\mathcal{U}$}
\exists \bar d\in T(\bar x; \Omega)\ \ \text{such that} \ \ G^\circ(\bar x; \bar d)<0 , 
\end{equation}
then, for any $\delta>0$ small enough, there exist $x_\delta\in C\cap B(\bar x, \delta)$ and  $\lambda:=(\lambda_1, \ldots, \lambda_m)\in\mathbb{R}^m_+$ with $\sum_{i\in I}\lambda_i=1$ such that  
\begin{align*}
&0\in \sum_{i\in I} \lambda_i\partial f_i(x_\delta)+\mathbb{R}_+\mathrm{cl\,conv}\left\{\bigcup \partial g_t(x_\delta)\,|\, t\in T(x_\delta)\right\}+N(x_\delta; \Omega)+\frac{1}{\delta}\max_{i\in I}\{\xi_i\}B_{X^*},
\\
&\lambda_i\big[f_i(x_\delta)-f_i(\bar x)+\xi_i-\max_{i\in I}\{f_i(x_\delta)-f_i(\bar x)+\xi_i\}\big]=0, \ \ i\in I,
\end{align*}
where $\mathrm{cl\,conv}(\cdot)$ denotes the closed convex hull with the closure  taken in the weak$^\ast$-topology of the dual space $X^\ast$.
\end{theorem}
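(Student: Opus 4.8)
The plan is to reduce the vector problem to a scalar $\varepsilon$-minimization problem, apply Ekeland's variational principle to produce an \emph{exact} perturbed minimizer $x_\delta$, and then read off the conclusion by combining the nonsmooth Fermat rule with the sum and max calculus of Lemmas \ref{Fermat-rule}--\ref{max-rule}. First I would exploit the very definition of a $\xi$-weakly Pareto solution. Setting
\[
F(x) := \max_{i \in I}\{f_i(x) - f_i(\bar x) + \xi_i\},
\]
the nonexistence of $x \in C$ with $f(x) + \xi < f(\bar x)$ says exactly that $F(x) \geqq 0$ for all $x \in C$, whereas $F(\bar x) = \max_{i\in I}\xi_i =: \varepsilon > 0$. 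Hence $\inf_C F \geqq 0$ and $F(\bar x) \leqq \inf_C F + \varepsilon$, so $\bar x$ is an $\varepsilon$-minimizer of the continuous, bounded-below function $F$ on the complete metric space $C$ (closed in $X$). Ekeland's variational principle with parameter $\delta$ then yields a point $x_\delta \in C$ with $\|x_\delta - \bar x\| \leqq \delta$, $F(x_\delta)\leqq F(\bar x)$, and such that $x_\delta$ genuinely minimizes over $C$ the function
\[
\psi(x) := F(x) + \frac{\varepsilon}{\delta}\,\|x - x_\delta\|.
\]
This produces the point $x_\delta \in C\cap B(\bar x,\delta)$ of the statement, and the coefficient $\varepsilon/\delta = \tfrac{1}{\delta}\max_{i\in I}\xi_i$ is precisely the one in the fuzzy term.

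Next I would apply the nonsmooth multiplier rule to the scalar constrained problem of minimizing $\psi$ over $C = \{x \in \Omega \;|\; G(x) \leqq 0\}$. Using the Fermat rule (Lemma \ref{Fermat-rule}) together with the qualification \eqref{Slater}, I expect to obtain a multiplier $\mu \geqq 0$ with $\mu G(x_\delta) = 0$ and
\[
0 \in \partial \psi(x_\delta) + \mu\,\partial G(x_\delta) + N(x_\delta; \Omega).
\]
The role of \eqref{Slater} is to rule out the abnormal (Fritz--John) case: were the objective multiplier to vanish, one would get $\zeta \in \partial G(x_\delta)$ with $-\zeta \in N(x_\delta;\Omega)$, so that $0 \leqq \langle \zeta, \bar d\rangle \leqq G^\circ(x_\delta; \bar d)$, contradicting the strict inequality $G^\circ(x_\delta;\bar d)<0$.

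The delicate point, and the one I expect to be the main obstacle, is that \eqref{Slater} is imposed at $\bar x$ whereas the multiplier rule must be invoked at the neighboring point $x_\delta$. I would argue that for $\delta$ small enough the qualification persists at $x_\delta$: upper semicontinuity of $(x,d)\mapsto G^\circ(x;d)$ keeps $G^\circ(x_\delta;\bar d)<0$ once $x_\delta$ is close to $\bar x$, and one must check that $\bar d$ (or a suitable perturbation of it) still lies in the Clarke tangent cone $T(x_\delta;\Omega)$. This stability of the qualifying direction is exactly what ``for any $\delta>0$ small enough'' encodes, and verifying it carefully is the technical heart of the argument.

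Finally I would unpack the inclusion via the calculus rules. The sum rule (Lemma \ref{sum-rule}) gives $\partial\psi(x_\delta) \subset \partial F(x_\delta) + \tfrac{\varepsilon}{\delta}B_{X^*}$, since the Clarke subdifferential of $x\mapsto\|x-x_\delta\|$ at $x_\delta$ is contained in $B_{X^*}$. The max rule (Lemma \ref{max-rule}) applied to $F$ furnishes $\lambda\in\mathbb{R}^m_+$ with $\sum_{i\in I}\lambda_i=1$, the complementarity relations $\lambda_i\big[f_i(x_\delta)-f_i(\bar x)+\xi_i-\max_{j\in I}\{f_j(x_\delta)-f_j(\bar x)+\xi_j\}\big]=0$, and $\partial F(x_\delta)\subset\sum_{i\in I}\lambda_i\partial f_i(x_\delta)$. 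Lastly, the standard upper estimate for the Clarke subdifferential of the supremum function $G=\max_{t\in T}g_t$, valid under assumptions (i)--(ii), yields $\partial G(x_\delta)\subset \mathrm{cl\,conv}\{\bigcup \partial g_t(x_\delta)\,|\,t\in T(x_\delta)\}$, whence $\mu\,\partial G(x_\delta)\subset \mathbb{R}_+\mathrm{cl\,conv}\{\bigcup\partial g_t(x_\delta)\,|\,t\in T(x_\delta)\}$. Substituting these three inclusions into the multiplier rule gives exactly the two displayed relations of the theorem.
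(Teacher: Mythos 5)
Your proposal follows essentially the same route as the paper's proof: scalarize via $\max_{i\in I}\{f_i(x)-f_i(\bar x)+\xi_i\}$, apply Ekeland's variational principle to produce $x_\delta$, and then combine the Fermat rule with the sum and max rules together with a decomposition of the normal cone to $C$ at $x_\delta$ under condition $(\mathcal{U})$. The one step you explicitly leave open --- the persistence of the qualification at points near $\bar x$ --- is handled in the paper by exactly the upper-semicontinuity contradiction argument you sketch, and your multiplier term $\mu\,\partial G(x_\delta)$ appears there as the inclusion $N(x_\delta;C)\subset N(x_\delta;\Omega)+\mathbb{R}_+\partial G(x_\delta)$ obtained from results of Hiriart-Urruty and Son et al.
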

\begin{proof} For  each $x\in X$, put $\psi (x):=\max_{i\in I}\{f_i(x)-f_i(\bar x)+\xi_i\}.$ Then, we have  $\psi(\bar x)=\max_{i\in I}\{\xi_i\}$. Since $\bar x$ is a $\xi$-weakly Pareto solution of problem \eqref{semi-problem}, one has
\begin{equation}\label{equ:1}
\psi(x)\geqq 0, \ \ \forall x\in C.
\end{equation} 
Clearly,  $\psi$ is locally Lipschitz and bounded from below on $C$. By \eqref{equ:1}, we have
\begin{equation*}
\psi(\bar x)\leqq \inf_{x\in C}\psi(x)+\max_{i\in I}\{\xi_i\}.
\end{equation*}
By the Ekeland variational principle \cite[Theorem 1.1]{Ekeland74}, for any $\delta>0$, there exists $x_\delta\in C$ such that $\|x_\delta-\bar x\|<\delta$ and
\begin{equation*}
\psi(x_\delta)\leqq \psi(x)+\frac{1}{\delta}\max_{i\in I}\{\xi_i\}\|x- x_\delta\|, \ \ \forall x\in C.
\end{equation*}
For each $x\in C$, put 
$$\varphi(x):=\psi(x)+\frac{1}{\delta}\max_{i\in I}\{\xi_i\}\|x- x_\delta\|.$$
Then $x_\delta$ is a  minimizer of $\varphi$ on $C$. By Lemma \ref{Fermat-rule}, we have
\begin{equation}\label{equ:2}
0\in \partial \varphi(x_\delta)+N(x_\delta; C).
\end{equation}
By Lemma \ref{sum-rule} and \cite[Example 4, p. 198]{Ioffe79}, one has
\begin{equation}\label{equ:3}
\partial \varphi(x_\delta)\subset \partial \psi(x_\delta)+\frac{1}{\delta}\max_{i\in I}\{\xi_i\} B_{X^*}.
\end{equation}
Thanks to Lemma \ref{max-rule}, we have
\begin{equation}\label{equ:4}
\partial \psi(x_\delta)\subset \bigg\{\sum_{i\in I}\lambda_i\partial f_i(x_\delta)|\lambda_i\geqq 0, i\in I, \sum_{i\in I}\lambda_i=1, \lambda_i[f_i(x_\delta)-f_i(\bar x)+\xi_i-\psi(x_\delta)]=0 \bigg\}.
\end{equation}

We claim that there exists $\bar{\delta}>0$ such that  for all $x\in B(\bar x, \bar{\delta})$ there is $d\in T(x; \Omega)$ satisfying $G^\circ(x; d)<0$. Indeed, if otherwise, then there exists a sequence $\{x^k\}$ converging to $\bar x$ such that $G^\circ(x^k; d)\geqq 0$ for all $k\in\mathbb{N}$ and $d\in T(x; \Omega)$. Hence, $G^\circ(x^k; \bar d)\geqq 0$ for all  $k\in\mathbb{N}$. By the upper semicontinuity of $G^\circ(\cdot, \cdot)$, we have
\begin{equation*}
G^\circ(\bar x; \bar d)\geqq \limsup_{k\to\infty}G^\circ(x^k; \bar d)\geqq 0,
\end{equation*}
contrary to condition \eqref{Slater}.

By \cite[Theorem 6]{Hiriart-Urruty78} and \cite[Theorem 2.1]{Son09}, for each $\delta\in (0, \bar \delta)$, we have 
\begin{align*}
N(x_\delta; C)&\subset N(x_\delta; \Omega)+\mathbb{R}_+\partial G(x_\delta)
\\
&\subset N(x_\delta; \Omega)+\mathbb{R}_+\mathrm{cl\,conv}\left\{\bigcup \partial g_t(x_\delta)\;|\; t\in T(x_\delta)\right\}.
\end{align*}
Combining this with \eqref{equ:2}--\eqref{equ:4}, we obtain the desired assertion.  $\hfill\qed$
\end{proof}

\begin{remark}
{\rm By using approximate subdifferentials, Lee {\em et al.} \cite[Theorem 8.3]{Lee12} derived some necessary optimality conditions for $\xi$-weakly Pareto solutions of a convex infinite vector optimization problem. However, we are not familiar with any results on optimality conditions for $\xi$-weakly Pareto solutions of a nonconvex problem  of type \eqref{semi-problem}. Theorem  \ref{thrm31} may be the first result of this type. We also note here that when $T$ is a finite set, the result in Theorem  \ref{thrm31} is a corresponding result of Theorem 3.4 of  \cite{chuong16}, where the optimality condition was given in terms of the limiting subdifferential.

}
\end{remark}

\begin{theorem}\label{thm32}
 Let $\bar x$ be a  $\xi$-quasi-weakly Pareto solution of problem \eqref{semi-problem}. If condition \eqref{Slater}  holds at $\bar x$ and the convex hull of $\left\{\bigcup \partial g_t(\bar x)\;|\; t\in T(\bar x)\right\}$  is weak$^*$-closed, then,  there exist $\lambda:=(\lambda_1, \ldots, \lambda_m)\in\mathbb{R}^m_+$ with $\sum_{i\in I}\lambda_i=1$,  and $\mu\in A(\bar x)$ such that
\begin{align}\label{approximate-KKT}
0\in \sum_{i\in I}\lambda_i\partial f_i(\bar x)+\sum_{t\in T}\mu_t\partial g_t(\bar x)+\sum_{i\in I}\lambda_i\xi_i B_{X^*}+N(\bar x; \Omega).
\end{align}
\end{theorem}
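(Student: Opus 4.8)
The plan is to mimic the proof of Theorem \ref{thrm31}, but to exploit the distance-weighted perturbation $\|x-\bar x\|\xi$ so that $\bar x$ itself — rather than a nearby Ekeland point $x_\delta$ — becomes an \emph{exact} minimizer of a suitable max-type function; this is precisely what upgrades the fuzzy conclusion of Theorem \ref{thrm31} to an exact stationarity condition at $\bar x$, with no Ekeland perturbation left over.

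First I would introduce, for each $x\in X$, the function
$$\psi(x):=\max_{i\in I}\big\{f_i(x)-f_i(\bar x)+\|x-\bar x\|\,\xi_i\big\},$$
and observe that $\psi(\bar x)=0$. The defining property of a $\xi$-quasi-weakly Pareto solution says that there is no $x\in C$ with $f_i(x)+\|x-\bar x\|\xi_i<f_i(\bar x)$ for every $i\in I$, i.e. $\psi(x)\geqq 0=\psi(\bar x)$ for all $x\in C$. Hence $\bar x$ is a global minimizer of the locally Lipschitz function $\psi$ on $C$, and Lemma \ref{Fermat-rule} yields $0\in\partial\psi(\bar x)+N(\bar x;C)$.

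Next I would estimate $\partial\psi(\bar x)$. Writing $h_i(x):=f_i(x)-f_i(\bar x)+\|x-\bar x\|\xi_i$, the max rule (Lemma \ref{max-rule}) gives $\partial\psi(\bar x)\subset\{\sum_{i\in I}\lambda_i\partial h_i(\bar x)\mid \lambda\in\mathbb{R}^m_+,\ \sum_{i\in I}\lambda_i=1\}$, where the complementarity condition $\lambda_i[h_i(\bar x)-\psi(\bar x)]=0$ holds automatically because $h_i(\bar x)=\psi(\bar x)=0$ for all $i$. The sum rule (Lemma \ref{sum-rule}), together with the fact that $x\mapsto\|x-\bar x\|$ is convex with $\partial(\|\cdot-\bar x\|)(\bar x)=B_{X^*}$, then gives $\partial h_i(\bar x)\subset\partial f_i(\bar x)+\xi_i B_{X^*}$, so that
$$0\in\sum_{i\in I}\lambda_i\partial f_i(\bar x)+\sum_{i\in I}\lambda_i\xi_i B_{X^*}+N(\bar x;C)$$
for some $\lambda\in\mathbb{R}^m_+$ with $\sum_{i\in I}\lambda_i=1$.

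Finally, exactly as in the proof of Theorem \ref{thrm31}, the constraint qualification \eqref{Slater} at $\bar x$ together with \cite[Theorem 6]{Hiriart-Urruty78} and \cite[Theorem 2.1]{Son09} furnishes the estimate $N(\bar x;C)\subset N(\bar x;\Omega)+\mathbb{R}_+\,\mathrm{cl\,conv}\{\bigcup\partial g_t(\bar x)\mid t\in T(\bar x)\}$. This is where the new hypothesis enters: since $\mathrm{conv}\{\bigcup\partial g_t(\bar x)\mid t\in T(\bar x)\}$ is assumed weak$^*$-closed, the closure is superfluous, and every element of $\mathbb{R}_+\,\mathrm{conv}\{\bigcup\partial g_t(\bar x)\mid t\in T(\bar x)\}$ is a \emph{finite} nonnegative combination $\sum_{t}\mu_t v_t$ with $v_t\in\partial g_t(\bar x)$ and $\mu_t\geqq 0$ supported on $T(\bar x)$ (without closedness this finite-support representation, hence membership in $\mathbb{R}_+^{|T|}$, could fail). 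The step I expect to be the real obstacle is verifying that this $\mu$ lies in $A(\bar x)$, i.e. that $\mu_t g_t(\bar x)=0$ for all $t$: when $G(\bar x)=0$ every active index $t\in T(\bar x)$ satisfies $g_t(\bar x)=0$, so complementarity is immediate; when $G(\bar x)<0$ one uses the (upper semi)continuity of $G$ to conclude that $C$ coincides with $\Omega$ near $\bar x$, forcing $N(\bar x;C)=N(\bar x;\Omega)$ and permitting the choice $\mu=0$. Substituting the resulting estimate of $N(\bar x;C)$ into the displayed inclusion yields \eqref{approximate-KKT}.
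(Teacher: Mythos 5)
Your proposal is correct and follows essentially the same route as the paper: the same max-function $\Phi(x)=\max_{i\in I}\{f_i(x)-f_i(\bar x)+\xi_i\|x-\bar x\|\}$, the observation that $\bar x$ is an exact minimizer of it on $C$, the Fermat and max rules, and the normal-cone decomposition under \eqref{Slater} with the weak$^*$-closedness removing the closure. Your extra care about the complementarity $\mu_t g_t(\bar x)=0$ (splitting the cases $G(\bar x)=0$ and $G(\bar x)<0$) is a detail the paper leaves implicit, but it does not change the argument.
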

\begin{proof} 
 For  each $x\in X$, put $\Phi (x):=\max_{i\in I}\{f_i(x)-f_i(\bar x)+\xi_i\|x-\bar x\|\}.$ Then, $\Phi(\bar x)=0$.  Since $\bar x$ is a $\xi$-quasi-weakly Pareto solution of problem \eqref{semi-problem}, we have 
 \begin{equation*} 
 \Phi(x)\geqq 0, \ \ \forall x\in C.
 \end{equation*} 
This means that $\bar x$ is a minimizer of $\Phi$ on $C$. By Lemma \ref{Fermat-rule}, one has  
\begin{equation}\label{equ:5}
0\in\partial \Phi(\bar x)+N(\bar x; C).
\end{equation}
By Lemma \ref{max-rule}, we have
\begin{equation}\label{equ:6}
\partial \Phi(\bar x)\subset \left\{  \sum_{i\in I}\lambda_i\left[\partial f_i(\bar x)+\xi_iB_{X^*}\right]\;|\; \lambda_i\geqq 0, \sum_{i\in I}\lambda_i=1\right\}. 
\end{equation}
Since condition \eqref{Slater}  holds at $\bar x$ and the convex hull of $\left\{\bigcup \partial g_t(\bar x)\;|\; t\in T(\bar x)\right\}$  is weak$^*$-closed, we obtain
\begin{equation}\label{equ:7}
\begin{split}
N(\bar x; C)&\subset N(\bar x; \Omega)+\mathbb{R}_+\partial G(\bar x)
\\
&\subset N(\bar x; \Omega)+\mathbb{R}_+\mathrm{conv}\left\{\bigcup \partial g_t(\bar x)\;|\; t\in T(\bar x)\right\}.
\end{split}
\end{equation}
To finish the proof of the theorem, it remains to combine \eqref{equ:5}, \eqref{equ:6}, and \eqref{equ:7}. $\hfill\Box$
\end{proof}
\begin{remark}\label{Remark32}
{\rm 
\begin{enumerate}[(i)]
\item When $X$ is a finite-dimensional space and the constraint functions $g_t\colon X\to\mathbb{R},$ $t\in T$, are locally Lipschitz with respect to $x$ uniformly in $t\in T$, i.e., for each $x\in X$, there is a neighborhood $U$ of $x$ and a constant $K>0$ such that
\begin{equation*}
|g_t(u)-g_t(v)|\leqq K\|u-v\|, \ \ \forall u, v \in U \ \ \text{and}\ \ \forall t\in T,
\end{equation*}
then the set $\left\{\bigcup \partial g_t(x)\;|\; t\in T(x)\right\}$ is compact. Consequently, its convex hull is always closed. 
\item Recently, by using the Chankong--Haimes scalarization scheme (see \cite{Chankong83}), Kim and Son \cite[Theorem 3.3]{Son18} obtained some necessary optimality conditions for  $\xi$-quasi Pareto solutions of a locally Lipschitz semi-infinite vector optimization problem.  We note here that condition \eqref{Slater} is weaker than the following condition \eqref{Kim-Son} used in \cite[Theorem 3.3]{Son18}:  
\begin{equation}\label{Kim-Son}\tag{${\mathcal{A}_i}$}
\exists \bar d\in T(\bar x; \Omega): g_t^\circ(\bar x; \bar d)<0  \ \ \forall t\in T(\bar x)  \ \ \text{and} \ \ f_k^\circ(\bar x; \bar d)<0,  \ \ \forall k\in I\setminus\{i\}.
\end{equation}
Thus, Theorem \ref{thm32} improves \cite[Theorem 3.3]{Son18}. To illustrate, we consider the following example  
\end{enumerate}
}
\end{remark} 
\begin{example}\label{Example31}
{\rm Let $f\colon\mathbb{R}\to\mathbb{R}^2$   be defined by $f(x):=(f_1(x),  f_2(x))$, where
\begin{equation*}
f_1(x):=
\begin{cases}
x^2\cos\frac{1}{x},\ \ &\text{if}\ \ x\neq 0,
\\
0,\ \ &\text{otherwise},
\end{cases}
\end{equation*}
and $f_2(x):=0$ for all $x\in\mathbb{R}$. Assume that $\Omega=\mathbb{R}$, $T=[1, 2]$, and $g_t(x)=tx$ for all $x\in\mathbb{R}$. Then the feasible set of problem \eqref{semi-problem} is $C=(-\infty, 0]$. Let $\bar x:=0\in C$. Clearly, for any $\xi\in \mathbb{R}^2_+\setminus\{0\}$, $\bar x$ is a $\xi$-quasi-weakly Pareto solution of \eqref{semi-problem}. It is easy to check that
\begin{equation*}
\partial f_1(\bar x)=[-1, 1], \partial f_2(\bar x)=\{0\}, \ \ \text{and} \ \ \partial g_t(\bar x)=\{t\}, \ \ \ \forall t\in T.
\end{equation*}
Hence, for each $d\in\mathbb{R}$, we have
\begin{equation*}
f_1^\circ(\bar x; d)=|d|, f_2^\circ(\bar x; d)=0,\ \ \text{and} \ \ g_t^\circ(\bar x; d)=td, \ \ \ \forall t\in T.
\end{equation*}
Clearly, every $d<0$ satisfies condition \eqref{Slater}. However, for every $i=1, 2$, condition \eqref{Kim-Son} does not hold. Thus  Theorem \ref{thm32} can be applied for this example, but not   \cite[Theorem 3.3]{Son18}.
}
\end{example}

The following concept is inspired from \cite{chuong14}.
\begin{definition}
{\rm Let $f:=(f_1, \ldots, f_m)$ and $g_T:=(g_t)_{t\in T}$.
\begin{enumerate}[(i)]
\item We say that $(f, g_T)$ is {\em generalized convex} on $\Omega$ at $\bar x$ if, for any $x\in \Omega$, $z_i^*\in \partial f_i(\bar x)$, $i=1, \ldots, m$, and $x^*_t\in \partial g_t(\bar x)$, $t\in T$, there exists $\nu\in T(\bar x; \Omega)$ satisfying
\begin{align*}
&f_i(x)-f_i(\bar x)\geqq \langle z^*_i, \nu\rangle, \ \ i=1, \ldots, m,
\\
&g_t(x)-g_t(\bar x)\geqq \langle x^*_t, \nu\rangle, \ \ t\in T,
\end{align*}  
and 
\begin{equation*}
\langle b^*, \nu\rangle\leqq \|x-\bar x\|, \ \ \forall b^*\in B_{X^*}. 
\end{equation*}
\item We say that $(f, g_T)$ is {\em strictly generalized convex} on $\Omega$ at $\bar x$ if, for any $x\in \Omega\setminus\{\bar x\}$, $z_i^*\in \partial f_i(\bar x)$, $i=1, \ldots, m$, and $x^*_t\in \partial g_t(\bar x)$, $t\in T$, there exists $\nu\in T(\bar x; \Omega)$ satisfying
\begin{align*}
&f_i(x)-f_i(\bar x)>\langle z^*_i, \nu\rangle, \ \ i=1, \ldots, m,
\\
&g_t(x)-g_t(\bar x)\geqq \langle x^*_t, \nu\rangle, \ \ t\in T,
\end{align*}
\end{enumerate}
and 
\begin{equation*}
\langle b^*, \nu\rangle\leqq \|x-\bar x\|, \ \ \forall b^*\in B_{X^*}. 
\end{equation*}
}
\end{definition}
\begin{remark}
{\rm Clearly, if $\Omega$ is convex and $f_i$, $i\in I$, and $g_t$, $t\in T$ are convex (resp., strictly convex), then $(f, g_T)$ is generalized convex (resp., strictly generalized convex) on $\Omega$ at any $\bar x\in\Omega$ with $\nu:=x-\bar x$ for each $x\in\Omega$. Furthermore, by a  similar argument  in \cite[Example 3.2]{chuong14}, we can show that the class of generalized convex functions is
properly larger than the one of convex functions. 

}
\end{remark} 
\begin{theorem}\label{thm33}
 Let $\bar x\in C$ and assume that there exist  $\lambda:=(\lambda_1, \ldots, \lambda_m)\in\mathbb{R}^m_+$ with $\sum_{i\in I}\lambda_i=1$,  and $\mu\in A(\bar x)$  satisfying \eqref{approximate-KKT}.
\begin{enumerate}[\rm(i)]
\item If $(f, g_T)$ is generalized convex on $\Omega$ at $\bar x$, then $\bar x$ is a  $\xi$-quasi-weakly Pareto solution of \eqref{semi-problem}. 

\item If $(f, g_T)$ is strictly generalized convex on $\Omega$ at $\bar x$, then $\bar x$ is a $\xi$-quasi Pareto solution of \eqref{semi-problem}.   
\end{enumerate}  
\end{theorem}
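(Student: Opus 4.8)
The plan is to argue by contradiction in both parts, after first turning the abstract inclusion \eqref{approximate-KKT} into a genuine equation in $X^*$. From the definition of the Minkowski sum and the scaled ball, condition \eqref{approximate-KKT} yields the existence of subgradients $z_i^*\in\partial f_i(\bar x)$ for $i\in I$, $x_t^*\in\partial g_t(\bar x)$ for $t\in T$ (only the finitely many $t$ with $\mu_t\neq 0$ contributing), together with $b^*\in B_{X^*}$ and $\eta\in N(\bar x;\Omega)$, such that
\[
0=\sum_{i\in I}\lambda_i z_i^*+\sum_{t\in T}\mu_t x_t^*+\Big(\sum_{i\in I}\lambda_i\xi_i\Big)b^*+\eta .
\]
These elements must be fixed \emph{before} any tangent direction is chosen, since the whole proof consists in pairing this identity with a single well-chosen $\nu\in T(\bar x;\Omega)$ and reading off the resulting scalar inequality.

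For part (i), suppose $\bar x$ were not a $\xi$-quasi-weakly Pareto solution; then there is $x\in C$ with $f_i(x)+\|x-\bar x\|\xi_i<f_i(\bar x)$ for every $i\in I$. Because $x\in C\subseteq\Omega$, generalized convexity of $(f,g_T)$ at $\bar x$ supplies one common direction $\nu\in T(\bar x;\Omega)$ satisfying $\langle z_i^*,\nu\rangle\leqq f_i(x)-f_i(\bar x)$ for all $i$, $\langle x_t^*,\nu\rangle\leqq g_t(x)-g_t(\bar x)$ for all $t$, and $\langle b^*,\nu\rangle\leqq\|x-\bar x\|$. Evaluating the displayed identity at $\nu$, I would bound the four groups of terms separately: the normal-cone term gives $\langle\eta,\nu\rangle\leqq 0$ since $\eta\in N(\bar x;\Omega)=[T(\bar x;\Omega)]^\circ$ and $\nu\in T(\bar x;\Omega)$; the constraint term is nonpositive because $\mu_t\geqq 0$ and $\mu\in A(\bar x)$ together with feasibility $g_t(x)\leqq 0$ give $\mu_t\big(g_t(x)-g_t(\bar x)\big)=\mu_t g_t(x)\leqq 0$; the objective block is bounded by $\sum_{i}\lambda_i\big(f_i(x)-f_i(\bar x)\big)$; and the perturbation block by $\big(\sum_i\lambda_i\xi_i\big)\|x-\bar x\|$. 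Summing produces $0\leqq\sum_{i\in I}\lambda_i\big[f_i(x)+\xi_i\|x-\bar x\|-f_i(\bar x)\big]$, which contradicts the assumed strict inequalities because $\sum_i\lambda_i=1$ forces some $\lambda_i>0$.

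For part (ii) the argument is the same, invoking strict generalized convexity instead. If $\bar x$ is not a $\xi$-quasi Pareto solution, there is $x\in C$ with $f_i(x)+\|x-\bar x\|\xi_i\leqq f_i(\bar x)$ for all $i$ and strict inequality for at least one index; in particular $x\neq\bar x$, so the strict version of the definition applies and yields $\langle z_i^*,\nu\rangle< f_i(x)-f_i(\bar x)$ for every $i$. Repeating the pairing, the objective block becomes a strict inequality (using again that some $\lambda_i>0$), so I would obtain $0<\sum_{i\in I}\lambda_i\big[f_i(x)+\xi_i\|x-\bar x\|-f_i(\bar x)\big]$, whereas $\mu_t\geqq 0$ componentwise nonpositivity gives $\sum_{i\in I}\lambda_i[\,\cdots]\leqq 0$, a contradiction.

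The step I expect to be the crux is that a \emph{single} direction $\nu$ must simultaneously control the objectives, all the constraints, and the unit-ball perturbation term; this is precisely what the definition of (strict) generalized convexity is engineered to guarantee, so once $z_i^*,x_t^*,b^*,\eta$ are frozen from \eqref{approximate-KKT}, the four sign estimates align. The secondary point requiring care is the bookkeeping of the complementarity $\mu\in A(\bar x)$, which is what forces the constraint term to contribute nonpositively rather than with an uncontrolled sign.
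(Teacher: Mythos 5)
Your proposal is correct and follows essentially the same route as the paper's proof: extract the subgradients $z_i^*$, $x_t^*$, $b^*$ and the normal-cone element from \eqref{approximate-KKT}, pair the resulting identity with the single direction $\nu$ furnished by (strict) generalized convexity, and use $\langle\omega^*,\nu\rangle\leqq 0$ together with the complementarity $\mu\in A(\bar x)$ and feasibility of $x$ to force the contradictory scalar inequality. No gaps.
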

\begin{proof} We will follow the proof scheme of \cite[Theorem 3.13]{chuong16}. By \eqref{approximate-KKT}, there exist $z_i^*\in\partial f_i(\bar x)$, $i=1, \ldots, m$,  $x^*_t\in\partial g_t(\bar x)$, $t\in T$, $b^*\in B_{X^*}$, and $\omega^*\in N(\bar x; \Omega)$ such that
\begin{equation*}
\sum_{i\in I}\lambda_iz^*_i+\sum_{t\in T}\mu_tx^*_t+\sum_{i\in I}\lambda_i\xi_i b^*+\omega^*=0,
\end{equation*}
or, equivalent,
\begin{equation*}
\sum_{i\in I}\lambda_iz^*_i+\sum_{t\in T}\mu_tx^*_t+\sum_{i\in I}\lambda_i\xi_i b^*=-\omega^*.
\end{equation*}

We first prove (i). On the contrary, if $\bar x$ is not a  $\xi$-quasi-weakly Pareto solution of \eqref{semi-problem}, then there is $x\in C$ such that
\begin{equation*}
f(x)+\|x-\bar x\|\xi< f(\bar x).
\end{equation*} 
From this and the fact that $\sum_{i\in I}\lambda_i=1$, we obtain
\begin{equation}\label{equ:8}
\sum_{i\in I} \lambda_i f_i(x) +\sum_{i\in I} \lambda_i \xi_i\|x-\bar x\|< \sum_{i\in I} \lambda_i f_i(\bar x).
\end{equation} 
Since $(f, g_T)$ is generalized convex on $\Omega$ at $\bar x$, for such $x$, there exists $\nu\in T(\bar x; \Omega)$ such that
\begin{align*}
0\leqq -\langle \omega^*, \nu\rangle&= \sum_{i\in I}\lambda_i \langle z^*_i, \nu \rangle  +\sum_{t\in T}\mu_t \langle x^*_t, \nu \rangle  +\sum_{i\in I}\lambda_i\xi_i \langle b^*, \nu\rangle  
\\
&\leqq \sum_{i\in I}\lambda_i [f_i(x)-f_i(\bar x)]+\sum_{t\in T}\mu_t [g_t(x)-g_t(\bar x)]+\sum_{i\in I}\lambda_i\xi_i\|x-\bar x\|.
\end{align*}
Hence,
\begin{equation*}
\sum_{i\in I} \lambda_i f_i(\bar x)\leqq \sum_{i\in I} \lambda_i f_i(x)+\sum_{t\in T}\mu_t [g_t(x)-g_t(\bar x)]+\sum_{i\in I}\lambda_i\xi_i\|x-\bar x\|.
\end{equation*}
Combining this with the facts that $x, \bar x\in C$ and $\mu\in A(\bar x)$, we conclude that
\begin{equation}\label{equ:9n}
\sum_{i\in I} \lambda_i f_i(\bar x)\leqq \sum_{i\in I} \lambda_i f_i(x) +\sum_{i\in I}\lambda_i\xi_i\|x-\bar x\|,
\end{equation}
contrary to \eqref{equ:8}.

We now prove (ii). Assume on the contrary that $\bar x$ is not a $\xi$-quasi Pareto solution of \eqref{semi-problem}, i.e., there exists $y\in C$ satisfying
\begin{equation*}
f(y)+\|y-\bar x\|\xi\leq f(\bar x).
\end{equation*} 
This implies that $y\neq \bar x$  and 
\begin{equation}\label{equ:9}
\sum_{i\in I} \lambda_i f_i(y) +\sum_{i\in I} \lambda_i \xi_i\|y-\bar x\|\leqq \sum_{i\in I} \lambda_i f_i(\bar x).
\end{equation}
Since $(f, g_T)$ is strictly generalized convex on $\Omega$ at $\bar x$, for such $y$, there exists $\vartheta\in T(\bar x; \Omega)$ such that
\begin{align*}
0\leqq -\langle \omega^*, \vartheta\rangle&= \sum_{i\in I}\lambda_i \langle z^*_i, \vartheta \rangle  +\sum_{t\in T}\mu_t \langle x^*_t, \vartheta \rangle  +\sum_{i\in I}\lambda_i\xi_i \langle b^*, \vartheta\rangle  
\\
&< \sum_{i\in I}\lambda_i [f_i(y)-f_i(\bar x)]+\sum_{t\in T}\mu_t [g_t(y)-g_t(\bar x)]+\sum_{i\in I}\lambda_i\xi_i\|y-\bar x\|.
\end{align*}
An analysis similar to that in the proof of \eqref{equ:9n} shows that
\begin{equation*} 
\sum_{i\in I} \lambda_i f_i(\bar x)< \sum_{i\in I} \lambda_i f_i(y) +\sum_{i\in I}\lambda_i\xi_i\|y-\bar x\|,
\end{equation*}
contrary to \eqref{equ:9}. $\hfill\Box$
\end{proof}
\begin{remark}
{\rm
The conclusions of Theorem \ref{thm33} are still valid if $(f, g_T)$ is generalized convex in the sense of Chuong and Kim \cite[Definition 3.3]{chuong14}.
}
\end{remark}

We now present an example which demonstrates the importance
of the generalized convexity of $(f, g_T)$ in Theorem \ref{thm33}. In particular, condition \eqref{approximate-KKT} alone is not  sufficient to guarantee that $\bar x$ is a $\xi$-quasi-weakly Pareto solution of \eqref{semi-problem} if the generalized convexity of $(f, g_T)$ on $\Omega$ at $\bar x$ is violated.
\begin{example}
{\rm  Let $f\colon\mathbb{R}\to\mathbb{R}^2$   be defined by $f(x):=(f_1(x),  f_2(x))$, where
\begin{equation*}
f_i(x):=
\begin{cases}
x^2\cos\frac{1}{x},\ \ &\text{if}\ \ x\neq 0,
\\
0,\ \ &\text{otherwise},
\end{cases}
\end{equation*}
for $i=1, 2$. Assume that $\Omega=\mathbb{R}$, $T=[0, 1]$, and $g_t(x)=x-t$  for all $x\in \mathbb{R}$ and $t\in [0, 1]$.  Then, the feasible set of \eqref{semi-problem} is $C=(-\infty, 0]$. Let $\bar x:=0\in C$. Clearly, $f_i$, $i=1, 2$, and $g_t$, $t\in T$, are locally Lipschitz at $\bar x$. An easy computation shows that 
\begin{equation*}
\partial f_1(\bar x)=\partial f_2(\bar x)=[-1, 1], \ \ \text{and}\ \ \partial g_t(\bar x)=\{1\}, \ \ \ \forall t\in T.
\end{equation*}
Take arbitrarily $\xi=(\xi_1, \xi_2)\in\mathbb{R}^2_+\setminus\{0\}$ satisfying $\xi_i<\frac{1}{\pi}$ for all $i=1, 2$. Then  we see that $\bar x$ satisfies condition \eqref{approximate-KKT} for $\lambda_1=\lambda_2=\frac{1}{2}$, and $\mu_t=0$ for all $t\in T$. However, $\bar x$ is not a $\xi$-quasi-weakly Pareto solution of \eqref{semi-problem}. Indeed, let $\hat{x}=-\frac{1}{\pi}\in C$. Then,
\begin{equation*}
f_i(\hat{x})+\xi_i\|\hat{x}-\bar x\|=\frac{1}{\pi}\left(\xi_i-\frac{1}{\pi}\right)<f_i(\bar x), \ \ \ \forall i=1, 2,
\end{equation*}
as required. We now show that $(f, g_T)$ is not generalized convex on $\Omega$ at $\bar x$. Indeed, by choosing $z_i^*=0\in \partial f_i(\bar x)$ for $i=1, 2$, we have
\begin{equation*}
f_i(\hat{x})-f_i(\bar x)=-\frac{1}{\pi^2}<\langle z^*_i, \nu\rangle, \ \ \ \forall \nu\in \mathbb{R}.
\end{equation*}
}
\end{example}

\section{Applications}\label{Applications}
\subsection{Cone-constrained convex vector optimization problems}

In this subsection,  we consider the following cone-constrained convex vector optimization problem:
\begin{equation}\label{semi-problem-b}\tag{CCVP}
\begin{split}
&\text{\rm Min}_{\,\mathbb{R}^m_+}\,f(x):=(f_1(x), \ldots, f_m(x))
\\
&\text{s.t.}\ \ x\in C:=\{x\in \Omega \;|\; g(x)\in -K\},  
\end{split} 
\end{equation}
where the function $f$ and the set $\Omega$ are as in the previous sections, $K$ is a closed convex cone in a normed space $Y$, and $g$ is a  continuous and $K$-convex mapping  from $X$  to $Y$. Recall that the mapping $g$ is said to be $K$-convex if 
$$g[\theta x+(1-\theta)y]-\theta g(x)-(1-\theta)g(y) \in -K,\ \ \forall x,y \in X,\ \ \forall \theta \in [0,1].$$
Let $Y^\ast$ be the dual space of $Y$ and $K^+$ be the positive polar cone of $K$, i.e.,
$$K^+:=\{y^\ast\in Y^\ast \mid \langle y^\ast, k\rangle \geqq 0,\ \ \forall k \in K \}.$$
Then, $K^+$ is weak$^\ast$-closed. Moreover, it is easily seen that
$$g(x) \in -K \Leftrightarrow  g_s(x)  \leqq 0,\ \  \forall  s \in K^+,$$
where $g_s(x):=\langle s, g(x)\rangle$.
 Hence,  problem \eqref{semi-problem-b} is equivalent to the following vector optimization problem: 
\begin{equation}\label{semi-problem-c}
\begin{split}
&\text{\rm Min}_{\,\mathbb{R}^m_+}\,f(x):=(f_1(x), \ldots, f_m(x))
\\
&\text{s.t.}\ \ x\in C:=\{x\in \Omega \;|\; g_s(x) \leqq 0,\ \ s\in K^+\}.  
\end{split} 
\end{equation}
In order to apply the results in Section \ref{section3} to  problem \eqref{semi-problem-c}, we need to have a compact set of indices, which is not the case with the cone $K^+$.  However, since $Y$ is a normed space, it is easily seen that the set $K^+\cap B_{Y^\ast}$ is weak$^\ast$-compact and  
$$g(x) \in -K \Leftrightarrow g_s(x)\leqq 0,\ \ \forall s \in K^+ \cap B_{Y^\ast}.$$
Hence we can rewrite problem \eqref{semi-problem-c} as  
\begin{equation*}\label{semi-problem-c1}
\begin{split}
&\text{\rm Min}_{\,\mathbb{R}^m_+}\,f(x):=(f_1(x), \ldots, f_m(x))
\\
&\text{s.t.}\ \ x\in C:=\{x\in \Omega \;|\; g_s(x) \leqq 0,\ \ s\in {T}\},  
\end{split} 
\end{equation*}
where ${T}:= K^+ \cap B_{Y^\ast}.$

\begin{proposition}\label{thm41} Let $\xi\in\mathbb{R}^m_+\setminus\{0\}$ and $\bar x$ be a  $\xi$-quasi-weakly Pareto solution of problem \eqref{semi-problem-b}. If  condition \eqref{Slater}  holds at $\bar x$ and the convex hull  ${\rm co}\left\{\bigcup \partial g_s(\bar x)\;|\; s\in  {I}(\bar x)\right\}$  is weak$^*$-closed, then,  there exist $\lambda:=(\lambda_1, \ldots, \lambda_m)\in\mathbb{R}^m_+$ with $\sum_{i\in I}\lambda_i=1$, and $\zeta \in K^+$ such that
\begin{align}\label{approximate-KKT-b}
0\in \sum_{i\in I}\lambda_i\partial f_i(\bar x)+\partial  g_\zeta(\bar x)+\sum_{i\in I}\lambda_i\xi_i B_{X^*}+N(\bar x; \Omega).
\end{align}
\end{proposition}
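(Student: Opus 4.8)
The plan is to apply Theorem \ref{thm32} to the equivalent reformulation of problem \eqref{semi-problem-b} as a problem of type \eqref{semi-problem} over the weak$^*$-compact index set $T:=K^+\cap B_{Y^*}$, and then to collapse the resulting finite family of multipliers into a single functional $\zeta\in K^+$. First I would check that the hypotheses of Theorem \ref{thm32} are met for this reformulation. The index set $T=K^+\cap B_{Y^*}$ is weak$^*$-compact, so the standing assumptions of Section \ref{section3} are in force; condition \eqref{Slater} holds at $\bar x$ by assumption; and the weak$^*$-closedness of the convex hull of $\bigcup\{\partial g_s(\bar x)\mid s\in T(\bar x)\}$ is precisely the convex-hull closedness hypothesis. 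Applying Theorem \ref{thm32} then yields $\lambda\in\mathbb{R}^m_+$ with $\sum_{i\in I}\lambda_i=1$ and $\mu\in A(\bar x)$ such that
\[
0\in \sum_{i\in I}\lambda_i\partial f_i(\bar x)+\sum_{s\in T}\mu_s\partial g_s(\bar x)+\sum_{i\in I}\lambda_i\xi_i B_{X^*}+N(\bar x;\Omega).
\]

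The remaining task is to absorb the term $\sum_{s\in T}\mu_s\partial g_s(\bar x)$ into $\partial g_\zeta(\bar x)$ for a suitable $\zeta\in K^+$. Let $\{s_1,\dots,s_k\}$ be the finite support of $\mu$ and set $\zeta:=\sum_{j=1}^{k}\mu_{s_j}s_j$. Since each $s_j\in K^+$, $\mu_{s_j}\geqq 0$, and $K^+$ is a convex cone, we have $\zeta\in K^+$; moreover, by linearity of the pairing, $g_\zeta=\sum_{j=1}^{k}\mu_{s_j}g_{s_j}$ as functions on $X$. Because $g$ is $K$-convex and continuous and each $s_j\in K^+$, every function $g_{s_j}=\langle s_j,g(\cdot)\rangle$ is continuous and convex, so its Clarke subdifferential coincides with the convex subdifferential. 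The elementary, always valid inclusion for the convex subdifferential of a nonnegative combination of convex functions then gives
\[
\sum_{s\in T}\mu_s\partial g_s(\bar x)=\sum_{j=1}^{k}\mu_{s_j}\partial g_{s_j}(\bar x)\subseteq \partial\Big(\sum_{j=1}^{k}\mu_{s_j}g_{s_j}\Big)(\bar x)=\partial g_\zeta(\bar x).
\]
Substituting this inclusion into the displayed consequence of Theorem \ref{thm32} immediately produces \eqref{approximate-KKT-b}.

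The only delicate point is the identification of the multiplier term with $\partial g_\zeta(\bar x)$: one must invoke the $K$-convexity of $g$, together with $s_j\in K^+$, to secure the convexity of each $g_{s_j}$, and then the coincidence of the Clarke and convex subdifferentials for convex functions, so that the scalar-multiple and sum rules apply with the correct sign. I expect this to be the main, though mild, obstacle. Notably, only the trivial direction of the Moreau--Rockafellar sum rule is needed here, so no further closure, continuity, or compactness argument beyond those already furnished by Theorem \ref{thm32} is required.
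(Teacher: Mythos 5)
Your proposal is correct and follows essentially the same route as the paper: apply Theorem \ref{thm32} to the reformulation over $T=K^+\cap B_{Y^*}$, set $\zeta:=\sum_s\mu_s s\in K^+$ using the finite support of $\mu$, and identify $\sum_s\mu_s\partial g_s(\bar x)$ with $\partial g_\zeta(\bar x)$ via convexity of each $g_s$. The only (harmless) difference is that the paper asserts equality of the two subdifferential expressions, while you use just the inclusion $\subseteq$, which indeed suffices.
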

\begin{proof} By  Theorem \ref{thm32}, there exist $\lambda:=(\lambda_1, \ldots, \lambda_m)\in\mathbb{R}^m_+$ with $\sum_{i\in I}\lambda_i=1$,  and $\mu\in  {A}(\bar x)$ such that
\begin{align*}
0\in \sum_{i\in I}\lambda_i\partial f_i(\bar x)+\sum_{s \in \mathcal{T}}\mu_s\partial g_s(\bar x)+\sum_{i\in I}\lambda_i\xi_i B_{X^*}+N(\bar x; \Omega).
\end{align*}
Note that for each $s \in  {T}$, the function $g_s$ is continuous and convex on $X$. Moreover, since $\mu \in {A}(\bar{x})$, there exists only finitely many $\mu_s, s \in {T},$ differ from zero. Hence,   
$$\sum_{s \in  {T}}\mu_s\partial g_s(\bar x)=\partial\left(\left\langle\sum_{s \in {T}}\mu_s s, g\right\rangle\right)(\bar x)=\partial g_\zeta(\bar x),$$
where $\zeta:=\sum_{s \in  {T}}\mu_s s$. Clearly, $\zeta \in K^+$. The proof is completed. $\hfill\Box$
\end{proof}
\begin{remark}{\rm  Assume that $\Omega$ is convex. Since $g$ is  continuous and  $K$-convex, we see that if  $f_i, i\in I$, are convex (resp., strictly convex), then $(f, g_T)$ is generalized convex (resp., strictly generalized convex) on $\Omega$ at any $\bar x\in \Omega$ with $\nu:=x-\bar x$ for each $x\in \Omega$. Thus, by Theorem \ref{thm33},   \eqref{approximate-KKT-b} is a sufficient condition for a point  $\bar{x}\in C$ to be a $\xi$-quasi-weakly Pareto (resp., $\xi$-quasi  Pareto) solution of  \eqref{semi-problem-b} provided that $f_i, i\in I$, are convex (resp., strictly convex).
}  
\end{remark}
\subsection{Semidefinite vector optimization problem}

Let $f : \Bbb{R}^n \to \Bbb{R}$ be a locally Lipschitz continuous function, $\Omega \subset \Bbb{R}^n$ be a nonempty closed convex subset of $\Bbb{R}^n$ and $g : \Bbb{R}^n \to S^p$ be a continuous mapping, where $S^p$ denotes the set of  $p\times p$ symmetric matrices. For a $ p\times p$ matrix $A=(a_{ij})$, the notion ${\rm trace}(A)$ is defined by
 $${\rm trace} (A):= \sum_{i=1}^p a_{ii}.$$  
We suppose that $S^p$ is equipped with a scalar product $A \bullet B:={\rm trace}(AB)$, where $AB$ is the matrix product  of $A$ and $B$. A matrix $A\in S^p$ is said to be a negative semidefinite (resp., positive semidefinite) matrix  if $\langle v, Av\rangle \leqq 0$ (resp., $\langle v, Av\rangle \geqq 0$) for all $v \in \Bbb{R}^n$. If matrix $A$ is negative semidefinite (resp., positive semidefinite matrix), it is denoted by  $A \preceq 0$  (resp., $A \succeq 0$). 

We now consider the following semidefinite vector optimization problem:
\begin{equation}\label{semi-problem-d}\tag{SDVP}
\begin{split}
&\text{\rm Min}_{\,\mathbb{R}^m_+}\,f(x):=(f_1(x), \ldots, f_m(x))
\\
&\text{s.t.}\ \ x\in C:=\{x\in \Omega \;|\; g(x) \preceq 0\}.  
\end{split} 
\end{equation}
Let us denote by $S^p_+$ the set of all positive semidefinite matrices of  $S^p$. It is well known that  $S^p_+$ is a proper convex cone, i.e., it is closed, convex, pointed, and solid (see \cite{Gri}) and that $S^p_+$ is a self-dual cone, i.e.,  $(S^p_+)^+=S^p_+$.  Hence,  problem \eqref{semi-problem-d} can be rewritten under the form of problem \eqref{semi-problem-b}, where $K:=S^p_+$.   For  $\Lambda \in K$, the function $g_\Lambda$ becomes a function from $\Bbb{R}^n$ to $\Bbb{R}$ defined by
$$ g_\Lambda(x)=\Lambda \bullet g(x),\ \ \forall x \in \Bbb{R}^n.$$

If $g$ is affine, i.e., $g(x):=F_0+\sum_{i=1}^n F_ix_i$, where $F_0, F_1, \ldots, F_n \in S^p$ are given matrices, then the subdifferential of the function $g_\Lambda(x)$ is equal to
$$\partial (g_\Lambda)(x)=(\Lambda \bullet F_1, \ldots, \Lambda \bullet F_n)=:(\Lambda \bullet F).$$  
In that case, by Remark \ref{Remark32},  the second assumption of Proposition \ref{thm41} can be removed. Thus we obtain the following result. 

\begin{proposition} Let $\xi\in\mathbb{R}^m_+\setminus\{0\}$.  If $\bar x$ is a  $\xi$-quasi-weakly Pareto solution of problem \eqref{semi-problem-d}, then,  there exist $\lambda:=(\lambda_1, \ldots, \lambda_m)\in\mathbb{R}^m_+$ with $\sum_{i\in I}\lambda_i=1$, and $\Lambda \in S^p_+$ such that
\begin{align}\label{approximate-KKT-d}
0\in \sum_{i\in I}\lambda_i\partial f_i(\bar x)+\Lambda \bullet F+\sum_{i\in I}\lambda_i\xi_i B_{X^*}+N(\bar x; \Omega).
\end{align}
\end{proposition}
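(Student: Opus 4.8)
The plan is to treat \eqref{semi-problem-d} as the instance of the cone-constrained problem \eqref{semi-problem-b} obtained by taking $K := S^p_+$, then to apply Proposition \ref{thm41} and convert its abstract conclusion into the matrix form \eqref{approximate-KKT-d} using the affine structure of $g$. Two structural facts make the reduction clean. First, $S^p_+$ is self-dual, so $K^+ = (S^p_+)^+ = S^p_+$; hence the scalarized index set is $T = S^p_+ \cap B_{Y^*}$ and any multiplier $\zeta \in K^+$ produced below automatically lies in $S^p_+$. Second, since $g$ is affine, each scalarized constraint $g_s(x) = s \bullet g(x) = s \bullet F_0 + \sum_{i=1}^n (s \bullet F_i)\,x_i$ is linear in $x$, hence locally Lipschitz uniformly in $s$, and the decision space $\mathbb{R}^n$ is finite-dimensional. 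By Remark \ref{Remark32}(i) the union $\bigcup\{\partial g_s(\bar x) \mid s \in T(\bar x)\}$ is then compact, so its convex hull is automatically weak$^*$-closed and the second hypothesis of Proposition \ref{thm41} is met for free.

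With these facts in hand, I would invoke Proposition \ref{thm41} (under condition \eqref{Slater} at $\bar x$, inherited from that proposition) to obtain $\lambda = (\lambda_1, \ldots, \lambda_m) \in \mathbb{R}^m_+$ with $\sum_{i \in I} \lambda_i = 1$ and a multiplier $\zeta \in K^+ = S^p_+$ satisfying \eqref{approximate-KKT-b}. It then remains only to evaluate $\partial g_\zeta(\bar x)$: because $g_\zeta(x) = \zeta \bullet g(x)$ is affine in $x$ with constant gradient $(\zeta \bullet F_1, \ldots, \zeta \bullet F_n) = \zeta \bullet F$, the Clarke subdifferential collapses to the singleton $\partial g_\zeta(\bar x) = \{\zeta \bullet F\}$. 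Substituting this into \eqref{approximate-KKT-b} and setting $\Lambda := \zeta \in S^p_+$ yields exactly \eqref{approximate-KKT-d}, which finishes the argument.

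The reduction is largely bookkeeping, so I do not expect a serious obstacle. The one point that genuinely needs care is the claim that the weak$^*$-closedness hypothesis of Proposition \ref{thm41} is automatic: this rests entirely on the compactness argument of Remark \ref{Remark32}(i), and therefore on both the finite-dimensionality of $\mathbb{R}^n$ and the linear, hence uniformly Lipschitz, dependence of the scalarized constraints on $x$. I would also flag explicitly that the Slater-type condition \eqref{Slater} is the standing constraint qualification carried over from Proposition \ref{thm41}, under which the conclusion is drawn.
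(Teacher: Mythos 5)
Your reduction is exactly the paper's argument: the paper likewise treats \eqref{semi-problem-d} as \eqref{semi-problem-b} with $K=S^p_+$ (using self-duality), invokes Remark \ref{Remark32}(i) to discharge the weak$^*$-closedness hypothesis of Proposition \ref{thm41} via compactness in finite dimensions, and uses the affine structure of $g$ to identify $\partial g_\zeta(\bar x)$ with the singleton $\{\zeta\bullet F\}$. Your observation that condition \eqref{Slater} must still be assumed is well taken --- the paper's statement of the proposition omits it, apparently inadvertently, since the surrounding text only claims that the closedness hypothesis can be removed.
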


\begin{remark} Since $g$ is an affine mapping, by Theorem \ref{thm33}, \eqref{approximate-KKT-d} is a sufficient condition for a point  $\bar{x}\in C$ to be a $\xi$-quasi-weakly Pareto (resp., $\xi$-quasi  Pareto) solution of  \eqref{semi-problem-d} provided that  $f_i, i\in I$, are convex (resp., strictly convex). 
\end{remark}

\section*{Acknowledgments} The authors would like to thank the anonymous referee and the handling Associate Editor for their valuable remarks and detailed suggestions that allowed us to improve the original version. The research of Ta Quang Son was supported by Vietnam National Foundation for Science and Technology Development (NAFOSTED) under grant number 101.01-2017.08. The research of Nguyen Van Tuyen was supported by the Ministry of Education and Training of Vietnam [grant number B2018-SP2-14].  The research of Ching-Feng Wen was supported by the Taiwan MOST [grant number 107-2115-M-037-001] as well as the grant from Research Center for Nonlinear 	Analysis and Optimization, Kaohsiung Medical University, Taiwan. 


\end{document}